\theoremstyle{plain}
\newtheorem{theorem}{Theorem}[section]
\newtheorem{corollary}[theorem]{Corollary}
\newtheorem{lemma}[theorem]{Lemma}
\newtheorem{proposition}[theorem]{Proposition}
\theoremstyle{definition}
\newtheorem{definition}[theorem]{Definition}
\newtheorem{remark}[theorem]{Remark}
\newtheorem{example}[theorem]{Example}
\newcommand{\om}{\omega}
\newcommand{\Om}{\Omega}
\newcommand{\fee}{\varphi}
\renewcommand{\c}[1]{{#1}_{\mathbb{C}}}
\renewcommand{\hom}[3]{ {Hom_{\mathbb{#1}} \left ( {#2} , {#3}\right )} }
\newcommand{\tensor}[1]{{{\otimes}_{\mathbb #1}}}
\newcommand{\duzhky}[1]{{\left ( #1 \right )}}
\newcommand{\mrm}[1]{{\mathrm #1 } }
\newcommand{\ra}{\rightarrow}
\newcommand{\ssst}{\scriptscriptstyle}
\DeclareMathOperator{\dirac}{\mathcal D}
\newcommand{\Map}{\mathrm{Map}}
\newcommand{\im}{{\text{\rmfamily\upshape Im} \,}}
\newcommand{\imH}{{\im\mathbb H}}
\newcommand{\Hstar}{{\mathbb H^*}}
\DeclareMathOperator{\lspan}{\mathrm{span}}
\newcommand{\hK}{hyperK\"{a}hler }
\newcommand{\qK}{quaternionic K\"{a}hler }
\newcommand{\kahler}{{K{\" a}hler }}
\newcommand{\CRF}{{Cauchy--Riemann--Fueter\ }}
\begin{document}

\title{Nonlinear Dirac operator and quaternionic analysis.}

\author{Andriy Haydys\thanks{this paper is based upon part of the author's thesis; partially supported by the
grant "Gauge theory and exceptional geometry" (Universit\"at
Bielefeld) while preparing the final form}\\
\textit{Universit\"at Bielefeld}}
\date{May 30, 2008}
\maketitle

\begin{abstract}
Properties of the \CRF equation for maps between quaternionic manifolds are studied.
Spaces of solutions in case of maps from a K3--surface to the cotangent bundle of a
complex projective space are computed. A relationship between harmonic spinors of a
generalized nonlinear Dirac operator and solutions of the \CRF equation are
established.
\end{abstract}

\section{Introduction.}

Nonlinear generalizations of the Dirac operator were known to physicists long
ago~\cite{BaggerWitten:83} and appeared in the framework of the $\sigma$--model.
Much later they were considered by mathematicians~\cite{Taubes:99,Pidstrygach:04} in
the realm of the Seiberg--Witten theory. The basic idea of generalization is to
replace the fibre of the spinor bundle $\mathbb C^2\cong \mathbb H$, i.e. the
simplest \hK manifold, by an arbitrary \hK manifold with suitable symmetries.

On the other hand, Anselmi and Fre~\cite{AnselmiFre:94} generalized quaternionic
analy\-sis in the form of Fueter~\cite{Fueter:34,Sudbery:79} for maps between
arbitrary \hK manifolds. It is natural to expect that there is a connection between
these two approaches, since in case of the flat source manifold harmonic spinors are
exactly solutions to the \CRF equation%
\begin{equation*}
\frac{\partial u}{\partial x_0}-i\frac{\partial u}{\partial x_1}-j\frac{\partial u}{\partial x_2}-%
k\frac{\partial u}{\partial x_3}=0,\qquad x\in\mathbb H,\ u:\mathbb
H\rightarrow\mathbb H.
\end{equation*}
One of the purposes of this paper is to establish a link between the generalized
Dirac operator and quaternionic analysis.

The paper is organized as follows. In section~\ref{Sect_AholomMaps} we study
properties of the \CRF equation for maps between quaternionic manifolds. In
particular we show that its solutions are exactly those maps, whose differential has
vanishing quaternion--linear component. Therefore we call such maps aholomorphic.
Usually a \hK manifold comes equipped with some symmetries. It turns out that
certain symmetries of the target manifold force aholomorphic maps to be
(anti)holomorphic (in the usual complex sense). This in turn allows to apply a
well--developed technique of algebraic geometry to compute certain spaces of
aholomorphic maps. In the last section we show that the nonlinear Dirac operator can
be regarded as an analogue of the $\partial$--operator in complex geometry. Harmonic
spinors are shown to be twisted (in an appropriate sense) aholomorphic maps.

\section{Aholomorphic maps.}\label{Sect_AholomMaps}

\textbf{Algebraic preliminaries.} Denote by $\mathbb H$ the $\mathbb R$--algebra of
quaternions and by $Sp(1)$ the group of quaternions of unitary length. Let $(U, J_1,
J_2, J_3)$ and $(V, I_1, I_2, I_3)$ be quaternionic vector spaces. One can regard
$U$ and $V$ as natural real $Sp(1)$--representations. Further, denote by $W$ the
standard $Sp(1)$--representation given by the left multiplication on the space of
quaternions $\mathbb H$. As usual, $\mathfrak{sp}(1)$ denotes the adjoint
representation of $Sp(1)$.

\begin{proposition}
Let $\dim_{\mathbb H}U=m,\ \dim_{\mathbb H}V=n$. We have the following decomposition
into irreducible components:%
\begin{equation}\label{Eq_HomDecomposition}
\hom RUV\cong\; 4mn\,\mathfrak{sp}(1)\oplus\mathbb R^{4mn},
\end{equation}
where $\mathbb R^{4mn}$ denotes the trivial $4mn$--dimensional representation.
\end{proposition}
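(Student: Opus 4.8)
The plan is to reduce the computation to the case $m=n=1$ and then to exploit the self-duality of the standard module $W$. As real $Sp(1)$--modules one has $U\cong mW$ and $V\cong nW$, where $Sp(1)$ acts by left multiplication on each quaternionic coordinate. Since $W\cong\mathbb H$ carries the $Sp(1)$--invariant inner product $\Re(\bar x y)$, the module $W$ is self-dual, so $U^*\cong mW$ and
\[
\hom RUV\cong U^*\tensor R V\cong mn\,(W\tensor R W).
\]
Hence it suffices to decompose the single $16$--dimensional module $W\tensor R W\cong\mathrm{End}_{\mathbb R}(\mathbb H)$ and then multiply every multiplicity by $mn$.

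The key step is to transport the $Sp(1)$--action through the isomorphism $\mathbb H\tensor R\mathbb H\cong\mathrm{End}_{\mathbb R}(\mathbb H)$ given by $a\otimes b\mapsto(x\mapsto axb)$. Writing the induced action on homomorphisms as $(g\cdot\fee)(x)=g\,\fee(g^{-1}x)$ and applying it to $\fee_{a,b}(x)=axb$ yields $(g\cdot\fee_{a,b})(x)=(gag^{-1})\,x\,b$. Thus, under this isomorphism, $Sp(1)$ acts by \emph{conjugation} on the left factor and \emph{trivially} on the right, i.e. $W\tensor R W\cong\mathbb H_{\mathrm{conj}}\tensor R\mathbb H_{\mathrm{triv}}$.

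It then remains to decompose each factor. The conjugation module splits as $\mathbb H_{\mathrm{conj}}\cong\mathbb R\oplus\imH$, with $\mathbb R$ the fixed real part and $\imH$ precisely the adjoint representation $\mathfrak{sp}(1)$, while the trivial factor is $\mathbb H_{\mathrm{triv}}\cong\mathbb R^4$. Therefore
\[
W\tensor R W\cong(\mathbb R\oplus\mathfrak{sp}(1))\tensor R\mathbb R^4\cong\mathbb R^4\oplus 4\,\mathfrak{sp}(1),
\]
and multiplying by $mn$ produces \eqref{Eq_HomDecomposition}; the count $16mn=4mn\cdot 3+4mn$ confirms the bookkeeping.

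The only delicate point, and the one place the argument can fail, is the identification of the $Sp(1)$--module structures: one must fix the convention (left multiplication on $U$ and $V$) and check carefully that the induced action on $\hom RUV$ conjugates only the first tensor factor. This is also what makes the result mildly surprising, since although $U$ and $V$ are assembled from copies of the standard module $W$, no copy of $W$ survives in $\hom RUV$. The same answer can be reached by complexifying, using $W_{\mathbb C}\cong 2\,\mathbb C^2$ together with the Clebsch--Gordan rule $\mathbb C^2\tensor C\mathbb C^2\cong\mathrm{Sym}^2\mathbb C^2\oplus\mathbb C$, but the quaternionic computation above is more transparent and sidesteps having to argue that the surviving summands are of real type.
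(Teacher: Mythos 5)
Your proof is correct, but it reaches \eqref{Eq_HomDecomposition} by a genuinely different route from the paper's. Both arguments first reduce to the single $16$--dimensional module $W\otimes_{\mathbb R}W$ (the paper via $U\cong W\otimes\mathbb R^m$, $V\cong W\otimes\mathbb R^n$ and $W^*\cong W$, exactly as you do). The paper then complexifies: using $\bar W\cong W$ it identifies $(W\otimes_{\mathbb R}W)_{\mathbb C}$ with a multiple of $\mathbb C^2\otimes_{\mathbb C}\mathbb C^2$ and invokes Clebsch--Gordan, $\mathbb C^2\otimes\mathbb C^2\cong S^2\mathbb C^2\oplus\mathbb C\cong\mathfrak{sp}(1)_{\mathbb C}\oplus\mathbb C$, before descending to real forms. (As printed, the paper's chain carries a multiplicity $\mathbb C^2$ where the dimension count forces $\mathbb C^4$; a harmless slip your computation never meets.) You instead stay over $\mathbb R$: the classical isomorphism $\mathbb H\otimes_{\mathbb R}\mathbb H\cong\mathrm{End}_{\mathbb R}(\mathbb H)$, $a\otimes b\mapsto(x\mapsto axb)$, turns the natural action $(g\cdot\fee)(x)=g\fee(g^{-1}x)$ into conjugation on the first factor and the trivial action on the second, and the decomposition drops out of $\mathbb H_{\mathrm{conj}}\cong\mathbb R\oplus\mathfrak{sp}(1)$. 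Your route buys two things: you never have to argue that the complex irreducible summands are of real type, and the parametrization $\fee_{a,b}$ diagonalizes the operator $C$ of \eqref{Eq_OperatorC} on the nose --- $C(\fee_{a,b})=\fee_{iai+jaj+kak,\;b}$ with $iai+jaj+kak=a-4\Re a$, so the right multiplications $\fee_{1,b}$ (precisely the quaternion--linear maps) span the $(-3)$--eigenspace and the $\fee_{a,b}$ with $a\in\imH$ span the $(+1)$--eigenspace, which is exactly Proposition~\ref{Prop_HomDecomposition} obtained for free. The one input you should state explicitly is surjectivity of $a\otimes b\mapsto(x\mapsto axb)$, i.e.\ that two--sided multiplications span $\mathrm{End}_{\mathbb R}(\mathbb H)$; it is standard, and by dimension count it is the only nontrivial linear--algebra fact your argument relies on.
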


\begin{proof}
First observe, that a choice of quaternionic basis gives an isomorphism $U\cong
W\otimes\mathbb R^m$ and similarly $V\cong W\otimes\mathbb R^n$. Since $W^*\cong W$
we get%
\begin{equation*}
\hom RUV\cong U^*\otimes V\cong W\tensor RW\otimes\mathbb R^{mn}.
\end{equation*}
Further, since $\bar W\cong W$ we have $\c{(W\tensor RW)}\cong\c{W}\tensor
C\c{W}\cong W\otimes W\otimes\mathbb C^2\cong (S^2W\oplus\mathbb C)\otimes\mathbb
C^2\cong\bigl( \c{\mathfrak{sp}(1)} \oplus\mathbb C\bigr)\otimes\mathbb C^2$ and the
statement follows.
\end{proof}

Our next aim is to find subspaces $B_\pm\subset\hom RUV$ that give the decomposition
(\ref{Eq_HomDecomposition}).

Consider a linear map $C:\hom RUV\ra\hom RUV,$%
\begin{equation}\label{Eq_OperatorC}
C(A)=I_1AJ_1+I_2AJ_2+I_3AJ_3.
\end{equation}
A direct computation shows that $C$ satisfies the equation $C^2+2C-3=0$.
Consequently, $C$ has two eigenvalues $1$ and $-3$ and we have the decomposition
\begin{equation*}
\hom RUV=B_+\oplus B_-,
\end{equation*}
where $B_+$(resp. $B_-$) denotes the eigenspace corresponding to the eigenvalue $1$
(resp. $-3$). Observe that since $C$ is $Sp(1)$--invariant, the subspaces $B_\pm$
are also $Sp(1)$--invariant.

The trivial $Sp(1)$--subrepresentation of $\hom RUV$ is by definition the space
$\hom HUV$ of quaternion--linear maps. It is straightforward to check the inclusions
$\hom HUV\subset B_-,\, \hom HUV\otimes\mathbb R^3\subset B_+$, where the
second one is given by%
\begin{equation*}
A_1\otimes e_1+A_2\otimes e_2+ A_3\otimes e_3\mapsto I_1A_1+I_2A_2+I_3A_3.
\end{equation*}
By dimension counting we conclude that these inclusions are in fact isomorphisms.
Since $B_+$ is $Sp(1)$--invariant and complementary to the trivial
$4mn$--dimensional representation, it must be isomorphic to $4mn\,\mathfrak{sp}(1)$.
We summarize the above considerations in the following proposition.

\begin{proposition}\label{Prop_HomDecomposition}
The eigenspace $B_-$ of the linear map $C$ (\ref{Eq_OperatorC}) corresponding to the
eigenvalue $-3$ consists of quaternion--linear maps and
we have the following decomposition%
\begin{equation*}
\hom RUV\cong\hom HUV\oplus B_+.
\end{equation*}
\end{proposition}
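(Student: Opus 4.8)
The plan is to read off both assertions from the two inclusions announced just before the statement, upgrading them to equalities by a dimension count. First I would confirm the two eigenvalue computations directly. If $A\in\hom HUV$ then $A$ intertwines the quaternionic structures, i.e.\ $AJ_a=I_aA$ for $a=1,2,3$; hence
\begin{equation*}
C(A)=\sum_{a}I_aAJ_a=\sum_a I_a^2A=-3A,
\end{equation*}
so that $\hom HUV\subseteq B_-$. For the second inclusion it suffices to check that $I_bA\in B_+$ for quaternion--linear $A$. Using $AJ_a=I_aA$ once more,
\begin{equation*}
C(I_bA)=\sum_a I_aI_bAJ_a=\Bigl(\sum_a I_aI_bI_a\Bigr)A,
\end{equation*}
and the quaternion relations give $\sum_a I_aI_bI_a=I_b$ (the term $a=b$ contributes $-I_b$, while each of the two terms with $a\neq b$ contributes $I_b$); thus $C(I_bA)=I_bA$ and the displayed map lands in $B_+$.

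Next I would pin down the dimensions. The space $\hom RUV$ is real of dimension $16mn$, while $\hom HUV$ has real dimension $4mn$; by~(\ref{Eq_HomDecomposition}) the trivial summand $\mathbb R^{4mn}$ accounts for exactly the latter. The cleanest way to split $\hom RUV$ between $B_+$ and $B_-$ is to compute $\operatorname{tr}C$. Under the identification $\hom RUV\cong U^*\otimes V$ the summand $A\mapsto I_aAJ_a$ is a tensor product of an operator on $U^*$ with left multiplication by $I_a$ on $V$, so its trace carries the factor $\operatorname{tr}(I_a|_V)$; since $I_a$ is a complex structure on $V$ this factor vanishes, whence $\operatorname{tr}C=0$. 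Comparing with the eigenspace decomposition $\hom RUV=B_+\oplus B_-$ then gives $\dim B_+-3\dim B_-=0$, which together with $\dim B_++\dim B_-=16mn$ forces $\dim B_-=4mn$ and $\dim B_+=12mn$.

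Finally, since $\hom HUV\subseteq B_-$ and both spaces have dimension $4mn$, the inclusion is an equality: $B_-=\hom HUV$ consists precisely of the quaternion--linear maps, which is the first assertion. The decomposition $\hom RUV\cong\hom HUV\oplus B_+$ is then the eigenspace splitting rewritten, and comparing with~(\ref{Eq_HomDecomposition})---where the only isotypic types are the trivial one and $\mathfrak{sp}(1)$---identifies the $12mn$--dimensional complement $B_+$ with $4mn\,\mathfrak{sp}(1)$. I expect the only step that is not purely formal to be this dimension verification: one must rule out the a priori possibility that part of the $\mathfrak{sp}(1)$--isotypic component sits inside $B_-$. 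The trace computation settles this at one stroke; alternatively one could prove that the map $A_1\otimes e_1+A_2\otimes e_2+A_3\otimes e_3\mapsto I_1A_1+I_2A_2+I_3A_3$ is injective, but checking injectivity by hand is the more delicate route, and I would favour the trace argument instead.
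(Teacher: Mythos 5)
Your proof is correct, and it follows the same overall strategy as the paper --- verify the inclusion $\hom HUV\subseteq B_-$ by the computation $C(A)=\sum_a I_a^2A=-3A$, then upgrade it to an equality by a dimension count --- but the way you pin down $\dim B_-$ is genuinely different. The paper exhibits an explicit injection $\hom HUV\otimes\mathbb R^3\hookrightarrow B_+$, $A_1\otimes e_1+A_2\otimes e_2+A_3\otimes e_3\mapsto I_1A_1+I_2A_2+I_3A_3$, and concludes from $4mn+12mn=16mn$ that both inclusions are equalities; as you correctly point out, this requires knowing that the displayed map is injective, a point the paper leaves as ``straightforward to check'' (it follows, e.g., from $B-I_lBJ_l=2I_lA_l$ for $B=\sum I_lA_l$). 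Your replacement of that step by the observation that $\operatorname{tr}C=0$ --- since each summand $A\mapsto I_aAJ_a$ is $J_a^*\otimes I_a$ on $U^*\otimes V$ and a complex structure is traceless --- combined with $\operatorname{tr}C=\dim B_+-3\dim B_-$ is clean, correct, and arguably sturdier, since it needs only the already-established fact that $C$ is diagonalizable with eigenvalues $1$ and $-3$. What the paper's route buys in exchange is the explicit identification $B_+\cong\hom HUV\otimes\mathbb R^3$, which it then uses to recognize $B_+$ as $4mn\,\mathfrak{sp}(1)$; your argument recovers that identification only indirectly, via the isotypic decomposition (\ref{Eq_HomDecomposition}), which is enough for the statement at hand.
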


\begin{definition}
We say that a linear map $A\in\hom RUV$ between two quaternionic vector spaces $(U,
J_1, J_2, J_3)$ and $(V, I_1, I_2, I_3)$ is \textit{aquaternionic} if
\begin{equation*}
I_1AJ_1+I_2AJ_2+I_3AJ_3=A.
\end{equation*}
\end{definition}

\begin{corollary}
An $\mathbb R$--linear map is aquaternionic if and only if its
quater\-ni\-on--linear component vanishes.
\end{corollary}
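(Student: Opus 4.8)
The plan is to read the corollary off directly from Proposition~\ref{Prop_HomDecomposition} combined with the definition of aquaternionicity; no new computation is needed. By definition, $A\in\hom RUV$ is aquaternionic exactly when $C(A)=A$, that is, when $A$ lies in the eigenspace $B_+$ of $C$ associated to the eigenvalue $1$. The whole task therefore reduces to recognizing membership in $B_+$ as the vanishing of the quaternion-linear component of $A$.

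First I would pin down the meaning of the quaternion-linear component. Proposition~\ref{Prop_HomDecomposition} gives a direct sum $\hom RUV=\hom HUV\oplus B_+$, so each $A$ factors uniquely as $A=A_{\mathbb H}+A_+$ with $A_{\mathbb H}\in\hom HUV$ and $A_+\in B_+$; this $A_{\mathbb H}$ is by definition the quaternion-linear component of $A$, and its uniqueness is guaranteed by the direct sum. I would then match this splitting with the eigenspace splitting of $C$: since $B_-=\hom HUV$ by the same proposition, the decomposition above is nothing but the eigenspace decomposition $\hom RUV=B_-\oplus B_+$, and $A_{\mathbb H}$ is precisely the projection of $A$ onto $B_-$ along $B_+$.

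The equivalence then follows immediately: $A$ is aquaternionic $\iff A\in B_+\iff$ the $B_-$-component of $A$ is zero $\iff A_{\mathbb H}=0$. The only point requiring care is the bookkeeping of what ``component'' means, and I expect no genuine obstacle here, since all of the substance was already carried by the eigenvalue relation $C^2+2C-3=0$ and the identification $B_-=\hom HUV$ established before the statement.
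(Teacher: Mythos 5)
Your argument is correct and is exactly the one the paper intends: the corollary is stated without proof as an immediate consequence of Proposition~\ref{Prop_HomDecomposition}, and your identification of the quaternion--linear component with the projection onto $B_-=\hom HUV$ along $B_+$ is precisely the implicit reasoning. Nothing is missing.
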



\textbf{Aholomorphic maps.} Let $(X, J_1, J_2, J_3)$ and $(M, I_1, I_2, I_3)$ be
almost hypercomplex manifolds and $u: X\ra M$ be a smooth map. Then the differential
$u_*$ is pointwise an $\mathbb R$--linear map between quaternionic vector spaces
$T.X$ and $T.M$.
\begin{definition}
We say that a map $u:X\ra M$ is \textit{aholomorphic}, if it satisfies the
\CRF equation%
\begin{equation}\label{Eq_CRF}
I_1u_*J_1 + I_2u_*J_2 + I_3u_*J_3= u_*.
\end{equation}
\end{definition}

\begin{theorem}
A map $u:X\ra M$ is aholomorphic if and only if the quaternion--linear component of
its differential vanishes at each point.\qed
\end{theorem}

Aholomorphic maps were studied under a variety of different names. They naturally
arise in the supersymmetric gauged $\sigma$--model and appeared in physical
literature \cite{AnselmiFre:94,FigueroaOFarrill:98} for the first time as
"triholomorphic maps" or "hyperinstantons". Such maps naturally appear in
higher--dimensional gauge theory~\cite{DonaldsonThomas:98} and were also studied by
Chen~\cite{Chen:99}, Chen and Li~\cite{ChenLi:00} ("quaternionic maps"),
Wang~\cite{Wang:03} ("triholomorphic maps"). Joyce~\cite{Joyce:98} ("q--holomorphic
functions") considered the case of the flat target manifold $\mathbb H$. Equation
(\ref{Eq_CRF}) was known long ago and was introduced in 1934 by Fueter
\cite{Fueter:34}("regular functions") for the simplest case of maps $u:\mathbb
H\ra\mathbb H$ in his attempts to construct a quaternionic analogue of the theory of
complex holomorphic maps.  An extensive exposition of the theory can be found in
\cite{Sudbery:79}.

In the author's opinion the proposed term \textit{aholomorphic map} better reflects
the properties of maps satisfying equation (\ref{Eq_CRF}), namely the fact that the
differential of solutions to (\ref{Eq_CRF}) has a vanishing quaternion--linear
component.

A reader can find examples of aholomorphic maps in the above mentioned sources.
Other examples will appear below.


Observe that the Cauchy--Riemann--Fuether equation (\ref{Eq_CRF}) is elliptic only
in case when a source manifold $X$ is four--dimensional. We will concentrate on this
case below.

\medskip

We now consider aholomorphic maps between \hK manifolds. A Riemannian
$4n$--dimensional manifold $M$ is called hyperK\"ahler if the holonomy group is a
subgroup of $Sp(n)$. In other words, a Riemannian manifold $(M,g)$ is \hK if it
admits three covariantly constant complex structures $I_1, I_2, I_3$ with
quaternionic relations%
\begin{equation*}
I_1I_2=-I_2I_1=I_3,\qquad I_1^2=I_2^2=I_3^2=-id,
\end{equation*}
compatible with the Riemannian structure: $g(I_l\cdot ,I_l\cdot)=g(\cdot ,\cdot),\
l=1,2,3$. Let $\om_l$ denote the \kahler 2--form corresponding to $I_l$.

\begin{proposition}\label{Prop_Weitzenboeck}
Suppose $X$ and $M$ are both \hK manifolds. If $X$ is also compact and
4--dimensional (i.e.~$X$ is a torus or a K3 surface),
then for any smooth map $u: X\ra M$ the following identity holds%
\begin{equation}\label{Eq_Weitzenboeck}
\frac 12 \|u_*\|_{L_2}^2=\frac 14 \|u_*-C(u_*)\|_{L_2}^2
-\sum\limits_{l=1}^3\int\limits_X\om_l^{\ssst X} \wedge u^*\om_l^{\ssst M}.
\end{equation}
\end{proposition}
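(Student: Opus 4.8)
The plan is to establish the identity fibrewise, as an equality of $4$--forms on $X$, and then integrate. Compactness of $X$ enters only to make every integral finite and, more importantly, to give the last term a topological meaning: each $\om_l^{\ssst X}$ is closed and each $u^*\om_l^{\ssst M}$ is closed, so $\int_X\om_l^{\ssst X}\wedge u^*\om_l^{\ssst M}$ depends only on the classes $[\om_l^{\ssst X}]$ and $u^*[\om_l^{\ssst M}]$ in $H^2(X;\mathbb R)$. The actual content of the proposition is thus purely pointwise and algebraic. First I would fix $x\in X$ and an orthonormal frame $e_0,e_1,e_2,e_3$ of $T_xX$ adapted to the quaternionic structure, $e_l=J_le_0$; in the dual coframe the three forms become the standard self--dual ones, $\om_1^{\ssst X}=e^0\wedge e^1+e^2\wedge e^3$ and cyclically for $\om_2^{\ssst X},\om_3^{\ssst X}$. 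Writing $U_i=u_*e_i$, the $4$--form $\om_l^{\ssst X}\wedge u^*\om_l^{\ssst M}$ is then read off as a single scalar multiple of the volume form $dV$ of $X$.

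The crux is a Wirtinger--type computation carried out for each $l$ separately. Since $\om_l^{\ssst X}$ is self--dual, only the self--dual part of $u^*\om_l^{\ssst M}$ survives in the pairing, and expanding in the adapted frame and using the skew--symmetry $g(I_lv,w)=-g(v,I_lw)$ gives
\begin{equation*}
\om_l^{\ssst X}\wedge u^*\om_l^{\ssst M}=\langle\om_l^{\ssst X},u^*\om_l^{\ssst M}\rangle\,dV=c\,\langle I_lu_*J_l,u_*\rangle\,dV
\end{equation*}
for a universal constant $c$ independent of $l$; equivalently, the right--hand side is proportional to $|\partial_lu|^2-|\bar\partial_lu|^2$, the difference of the holomorphic and antiholomorphic energy densities of $u$ viewed as a map of the Kähler pair $(X,J_l)\ra(M,I_l)$. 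I would fix $c$ once and for all by evaluating both sides on a single nonzero model, e.g.~the linear map $T_xX\ra T_{u(x)}M$ given in quaternionic coordinates by left multiplication by $i$. Summing over $l=1,2,3$ and recalling $C(u_*)=\sum_lI_lu_*J_l$ then produces the single pointwise identity
\begin{equation*}
\sum_{l=1}^3\om_l^{\ssst X}\wedge u^*\om_l^{\ssst M}=c\,\langle C(u_*),u_*\rangle\,dV.
\end{equation*}

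It remains to rewrite $\|u_*-C(u_*)\|^2$ through the same invariant $\langle C(u_*),u_*\rangle$. The operator $C$ is self--adjoint for the pointwise inner product on the space of $\mathbb R$--linear maps $T_xX\ra T_{u(x)}M$, and it satisfies its minimal polynomial $C^2+2C-3=0$; hence $\langle C^2(u_*),u_*\rangle=3|u_*|^2-2\langle C(u_*),u_*\rangle$ and
\begin{equation*}
|u_*-C(u_*)|^2=|u_*|^2-2\langle C(u_*),u_*\rangle+\langle C^2(u_*),u_*\rangle=4|u_*|^2-4\langle C(u_*),u_*\rangle.
\end{equation*}
Substituting the previous display to eliminate $\langle C(u_*),u_*\rangle$ and integrating over $X$ yields an identity of the form $\tfrac12\|u_*\|^2=\kappa\,\|u_*-C(u_*)\|^2-\sum_l\int_X\om_l^{\ssst X}\wedge u^*\om_l^{\ssst M}$ with $\kappa$ fixed by the constant $c$ above, which is (\ref{Eq_Weitzenboeck}).

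I expect the Wirtinger step to be the only genuine obstacle, and it is essentially careful bookkeeping: one must fix the orientation of $X$ so that the $\om_l^{\ssst X}$ are really self--dual, verify that the anti--self--dual part of $u^*\om_l^{\ssst M}$ drops out of the pairing, and track the numerical factor coming from the relations $g(I_lv,w)=-g(v,I_lw)$ together with the quaternion identities for the $I_l$. Once the formula $\sum_l\om_l^{\ssst X}\wedge u^*\om_l^{\ssst M}=c\,\langle C(u_*),u_*\rangle\,dV$ is secured, everything else is formal manipulation of the self--adjoint $C$ through its eigenvalues $1$ and $-3$. It is worth emphasising that the hyperkähler hypothesis on $X$ enters only through the existence of the closed self--dual forms $\om_l^{\ssst X}$ and the induced orientation, and that for an aholomorphic map $u_*\in B_+$ one has $u_*-C(u_*)=0$, so (\ref{Eq_Weitzenboeck}) reduces to $\tfrac12\|u_*\|^2=-\sum_l\int_X\om_l^{\ssst X}\wedge u^*\om_l^{\ssst M}$; the energy is then topological, which exhibits aholomorphic maps as energy minimizers in their homotopy class.
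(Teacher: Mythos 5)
Your overall strategy is sound and, unlike the paper, self-contained: the paper disposes of this proposition by citing Chen--Li \cite[Proposition~2.2]{ChenLi:00} and merely records the dictionary needed to translate their statement (each $\om_l^{\ssst X}$ is self-dual, and the inner product on $\Lambda^2\mathbb R^4$ is the one induced by $*\otimes id$ followed by the wedge). Your pointwise Wirtinger computation combined with the spectral algebra of $C$ is precisely the computation living inside the cited result, so as a route it is legitimate and arguably preferable in a self-contained exposition. The two nontrivial ingredients you isolate are both correct: $\om_l^{\ssst X}\wedge u^*\om_l^{\ssst M}=c\,\langle I_lu_*J_l,u_*\rangle\,dV$ with $c$ universal and independent of $l$ (self-duality of $\om_l^{\ssst X}$ kills the anti-self-dual part of $u^*\om_l^{\ssst M}$, and skew-adjointness of the complex structures does the rest), and $|A-C(A)|^2=4|A|^2-4\langle C(A),A\rangle$, which follows from $C^2+2C-3=0$ together with self-adjointness of $C$ (the latter deserves one line: the signs from $I_l^{\mathrm T}=-I_l$ and $J_l^{\mathrm T}=-J_l$ cancel under the trace).

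The one place where you must not postpone the work is the normalization, because the constant in front of $\|u_*-C(u_*)\|_{L_2}^2$ is the only numerical content of the identity. Carrying out your own recipe in an adapted frame $e_l=J_le_0$, so that $\om_1^{\ssst X}=e^0\wedge e^1+e^2\wedge e^3$, one finds $\langle I_1u_*J_1,u_*\rangle=-2\bigl((u^*\om_1^{\ssst M})_{01}+(u^*\om_1^{\ssst M})_{23}\bigr)$, hence $c=-\tfrac12$; feeding this into your final manipulation gives $\kappa=\tfrac18$, not the $\tfrac14$ appearing in (\ref{Eq_Weitzenboeck}). A sanity check on densities for $u=\mathrm{id}:\mathbb H\ra\mathbb H$ confirms this: there $|u_*|^2=4$, $|u_*-C(u_*)|^2=64$, $\sum_l\om_l\wedge\om_l=6\,dV$, and $2=\tfrac18\cdot 64-6$ whereas $\tfrac14\cdot 64-6=10$. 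So either the paper's norm convention differs by a factor of $2$ from the trace norm, or its constant is off; in either case your proof, once the constant is actually computed rather than deferred to "evaluation on a model", yields a definite value and you should state it and reconcile it with the displayed formula. None of this affects the later applications --- only nonnegativity of the first term and the topological nature of the second are used in Corollary~\ref{Cor_VanishingTheorem} and Proposition~\ref{Prop_AholomorphicMapsAreAntiholom} --- but a proof of an identity has to get the identity right.
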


This proposition was essentially proven by Chen and Li
\cite[Proposition~2.2]{ChenLi:00}. Formula (\ref{Eq_Weitzenboeck}) immediately
follows from the result of Chen and Li, once you observe that each \kahler form on
$X$ is self--dual and that the induced scalar product on $\Lambda^2\mathbb R^4$
is given by the sequence%
\begin{equation*}
\Lambda^2\mathbb R^4\otimes\Lambda^2\mathbb R^4\xrightarrow{\; *\,\otimes{id}\;}%
\Lambda^2\mathbb R^4\otimes\Lambda^2\mathbb R^4\xrightarrow{\;\cdot\,\wedge\cdot\ }%
\Lambda^4\mathbb R^4\cong\mathbb R.
\end{equation*}

\begin{corollary}[Vanishing theorem]\label{Cor_VanishingTheorem}
Let $X$ and $M$ be as in Proposition~\ref{Prop_Weitzenboeck}. If the cohomology
class of each \kahler form $\om_l^{\ssst M}$ on $M$ vanishes, then any aholomorphic
map $u:X\ra M$ is constant.\qed
\end{corollary}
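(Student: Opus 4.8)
The plan is to use the Weitzenböck-type identity of Proposition~\ref{Prop_Weitzenboeck} together with the aholomorphicity condition, and then to exploit the fact that the remaining integral term is a topological invariant that depends only on cohomology classes.

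First I would observe that if $u$ is aholomorphic, then by the very definition of the \CRF equation (\ref{Eq_CRF}) we have $C(u_*) = u_*$ pointwise, where $C$ is the operator from (\ref{Eq_OperatorC}). Consequently the term $\|u_* - C(u_*)\|_{L_2}^2$ on the right-hand side of (\ref{Eq_Weitzenboeck}) vanishes identically, and the identity collapses to
\begin{equation*}
\frac 12 \|u_*\|_{L_2}^2 = -\sum_{l=1}^3 \int_X \om_l^{\ssst X} \wedge u^*\om_l^{\ssst M}.
\end{equation*}

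Next I would argue that the right-hand side is a topological quantity. Since $X$ is compact and the \kahler forms $\om_l^{\ssst X}$ are closed, each integral $\int_X \om_l^{\ssst X} \wedge u^*\om_l^{\ssst M}$ depends only on the de Rham cohomology class $[\om_l^{\ssst X}]$ and on the pullback class $u^*[\om_l^{\ssst M}] = [u^*\om_l^{\ssst M}]$. By hypothesis each class $[\om_l^{\ssst M}]$ vanishes, hence $u^*[\om_l^{\ssst M}] = 0$ for every $l$, so every integral on the right vanishes. Therefore $\|u_*\|_{L_2}^2 = 0$, which forces $u_* \equiv 0$, i.e. $u$ is constant.

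The only delicate point, and the step I would treat most carefully, is the passage asserting that $\int_X \om_l^{\ssst X} \wedge u^*\om_l^{\ssst M}$ is invariant under changing $u^*\om_l^{\ssst M}$ within its cohomology class; this is where compactness of $X$ and Stokes' theorem enter. Concretely, if $[\om_l^{\ssst M}] = 0$ then $u^*\om_l^{\ssst M} = d\eta_l$ is exact, and $\int_X \om_l^{\ssst X} \wedge d\eta_l = \pm\int_X d(\om_l^{\ssst X}\wedge \eta_l) = 0$ since $\om_l^{\ssst X}$ is closed and $X$ is closed (boundaryless). Everything else is a direct consequence of the Weitzenböck identity and positivity of the $L_2$-norm, so no genuine obstacle remains once this vanishing of the topological term is in place.
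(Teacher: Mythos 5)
Your argument is correct and is exactly the one the paper intends: the corollary is stated with an immediate \qed precisely because, for an aholomorphic $u$, the term $\|u_*-C(u_*)\|_{L_2}^2$ in (\ref{Eq_Weitzenboeck}) drops out and the remaining integrals vanish by exactness of $u^*\om_l^{\ssst M}$ and Stokes' theorem. Your explicit treatment of the Stokes step is a fine elaboration of what the paper leaves implicit.
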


Let $\mathcal I=\mathcal I_{\ssst M}$ denote the trivial $3$--dimensional subspace
of $\Gamma(End (TM))$ spanned by $I_1, I_2$ and $I_3$ naturally identified with
$\imH=\mathfrak{sp}(1)$.

\begin{definition}\label{Defn_PermutingAction}
An isometric action of the group $Sp(1)$ (or $SO(3)$) on a \hK manifold $M$ is
called \textit{permuting} if the subspace $\mathcal I$ is preserved and the induced
action on $\mathcal I$ is the adjoint one.
\end{definition}

The hypothesis of Corollary~\ref{Cor_VanishingTheorem} is automatically satisfied
for \hK manifolds that admit a permuting action of $Sp(1)$ or
$SO(3)$~\cite{Galicki:93}. Such actions will play a crucial role in
Section~\ref{Sect_HarmonicSpinors}. The class of \hK manifolds that admit a
permuting action is quite wide and includes a lot of interesting examples: $\mathbb
H^n$ with the flat metric and its \hK reductions with respect to the zero value of
momentum map; different moduli spaces, obtained as infinite--dimensional \hK
reductions and in particular moduli spaces of framed instantons~\cite{Maciocia:91}
over $\mathbb R^4$ and monopoles~\cite{AtiyahHitchin:88}. For any \qK manifold with
positive scalar curvature Swann~\cite{Swann:91} constructed a \hK manifold with
permuting action of $\Hstar\supset Sp(1)$.


\medskip

Let us consider a lager class of target \hK manifolds. Namely suppose that %
$(M, I_1, I_2, I_3)$ admits only an action of $S^1$
that fixes one complex structure, say $I_1$, and rotates the other two, i.e.%
\begin{equation}\label{Eq_PermutingS1Action}
\duzhky{L_z}_*\, I_1= I_1\duzhky{L_z}_*,\qquad %
\duzhky{L_z}_*\, I_w\duzhky{L_{\bar z}}_*= I_{zw},
\end{equation}
where $L_z:M\ra M$ denotes the left shift by $z\in S^1$, $w=a+bi$ is a complex
number of unitary length, $I_w=aI_2+bI_3$.

\begin{proposition}\label{Prop_AholomorphicMapsAreAntiholom}
Let $X$ be a compact \hK 4--dimensional manifold. Assume that the target \hK
manifold $M$ admits an isometric action of $S^1$ that fixes $I_1$ while rotating
$I_2$ and $I_3$. Then a map $u: X\ra M$ is aholomorphic if and only if it is $(J_1,
I_1)$--antiholomorphic.
\end{proposition}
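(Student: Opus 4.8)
The plan is to reduce the statement to the Weitzenb\"ock identity (\ref{Eq_Weitzenboeck}) of Proposition~\ref{Prop_Weitzenboeck}, after using the $S^1$--action to annihilate two of its three curvature terms. First I would extract the geometric consequence of the hypothesis (\ref{Eq_PermutingS1Action}). Let $K$ be the \Kvf generating the $S^1$--action and differentiate (\ref{Eq_PermutingS1Action}) at the identity: the relation $\duzhky{L_z}_*I_1=I_1\duzhky{L_z}_*$ gives $\mathcal L_K\om_1^{\ssst M}=0$, while the rotation of $I_2,I_3$ gives $\mathcal L_K\om_2^{\ssst M}=\pm\om_3^{\ssst M}$ and $\mathcal L_K\om_3^{\ssst M}=\mp\om_2^{\ssst M}$. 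Since the \kahler forms on $M$ are closed, Cartan's formula $\mathcal L_K=d\iota_K+\iota_K d$ shows that $\om_2^{\ssst M}$ and $\om_3^{\ssst M}$ are exact. This exactness is the genuinely new geometric input; everything else will be a rearrangement of (\ref{Eq_Weitzenboeck}).

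Next, for any smooth $u:X\ra M$ the pullbacks $u^*\om_2^{\ssst M}$ and $u^*\om_3^{\ssst M}$ are then exact, and since $\om_2^{\ssst X},\om_3^{\ssst X}$ are closed, Stokes' theorem yields $\int_X\om_l^{\ssst X}\wedge u^*\om_l^{\ssst M}=0$ for $l=2,3$. Hence (\ref{Eq_Weitzenboeck}) collapses to
\begin{equation*}
\frac12\|u_*\|_{L_2}^2=\frac14\|u_*-C(u_*)\|_{L_2}^2-\int_X\om_1^{\ssst X}\wedge u^*\om_1^{\ssst M},
\end{equation*}
retaining only the preserved \kahler form $\om_1$.

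The remaining integral is the standard \kahler invariant for the pair $(J_1,I_1)$. I would decompose $u_*=(u_*)^+ +(u_*)^-$ into its $(J_1,I_1)$--complex--linear and complex--antilinear parts, explicitly $(u_*)^\pm=\frac12\duzhky{u_*\mp I_1 u_* J_1}$, so that $(u_*)^+\equiv 0$ is exactly the antiholomorphicity condition $I_1u_*=-u_*J_1$. Then I would verify the pointwise identity $\langle\om_1^{\ssst X},u^*\om_1^{\ssst M}\rangle=\frac12\duzhky{|(u_*)^+|^2-|(u_*)^-|^2}$; this uses only that $\om_1^{\ssst X}$ is self--dual, so that the wedge becomes an inner product against the volume form, followed by a short computation in a $J_1$--adapted orthonormal frame. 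Integrating and combining with the orthogonal splitting $\|u_*\|_{L_2}^2=\|(u_*)^+\|_{L_2}^2+\|(u_*)^-\|_{L_2}^2$ then gives the clean relation
\begin{equation*}
\|(u_*)^+\|_{L_2}^2=\frac14\|u_*-C(u_*)\|_{L_2}^2.
\end{equation*}

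From here the equivalence is immediate: $u$ is aholomorphic iff $C(u_*)=u_*$ identically, iff the right--hand side vanishes, iff $(u_*)^+\equiv 0$, i.e.\ iff $u$ is $(J_1,I_1)$--antiholomorphic. The step I expect to require the most care is the bookkeeping of signs — both in matching $\mathcal L_K\om_2^{\ssst M}$ with $\pm\om_3^{\ssst M}$ and in fixing the sign of the \kahler invariant — so that it is the \emph{holomorphic} part $(u_*)^+$ that ends up on the left, producing the \emph{anti}holomorphic conclusion rather than the holomorphic one; a quick test on the identity map $u=\mathrm{id}$ (for which $\int_X\om_1\wedge\om_1>0$) pins the sign down.
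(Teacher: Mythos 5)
Your proposal is correct, and its opening steps (Cartan's formula to get exactness of $\om_2^{\ssst M},\om_3^{\ssst M}$, hence the collapse of (\ref{Eq_Weitzenboeck}) to the single $\om_1$ term) coincide with the paper's. But the decisive step is genuinely different. The paper exploits the $S^1$--action \emph{dynamically}: since the action is isometric, the energy is $S^1$--invariant, and since an aholomorphic $u$ is an absolute minimizer of $E$ in its homotopy class (the topological term in (\ref{Eq_WeitzenboeckSimplified}) being a homotopy invariant), every translate $u^z=L_z\circ u$ is again aholomorphic; conjugating the \CRF equation for $u^z$ by $\duzhky{L_{\bar z}}_*$ and setting $z=-1$ yields $I_1u_*J_1-I_2u_*J_2-I_3u_*J_3=u_*$, which added to (\ref{Eq_CRF}) gives $I_1u_*J_1=u_*$ pointwise. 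You instead use the action only \emph{statically}, to obtain exactness, and then convert the surviving term via the standard K\"ahler energy identity $\langle\om_1^{\ssst X},u^*\om_1^{\ssst M}\rangle=\frac12\bigl(|(u_*)^+|^2-|(u_*)^-|^2\bigr)$, arriving at an $L_2$ identity equating $\|(u_*)^+\|_{L_2}^2$ with a positive multiple of $\|u_*-C(u_*)\|_{L_2}^2$. Both arguments are sound; I verified your pointwise identity on representative linear maps (quaternion--linear, $I_1$--linear aquaternionic, $I_1$--antilinear), and your sign comes out right, so that the \emph{holomorphic} part is the one forced to vanish. Your route buys a strictly stronger statement --- only exactness of $\om_2^{\ssst M}$ and $\om_3^{\ssst M}$ is needed, not an isometric circle action --- and it sidesteps the minimization-within-a-homotopy-class argument, at the cost of one extra (standard, Lichnerowicz/Eells--Wood type) pointwise computation. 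One caveat: do not insist on the literal coefficient $\frac14$ in your final identity $\|(u_*)^+\|_{L_2}^2=\frac14\|u_*-C(u_*)\|_{L_2}^2$; it is inherited from the normalization in (\ref{Eq_Weitzenboeck}) (testing (\ref{Eq_Weitzenboeck}) on a quaternion--linear differential suggests the constant there should be read with care), but only the positivity of the coefficient matters for the equivalence, so your argument is unaffected.
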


\begin{proof}
First notice that existence of an isometric $S^1$--action that fixes $I_1$ and
rotates the other two complex structures implies that $\om_2^{\ssst M}$ and
$\om_3^{\ssst M}$ are exact~\cite{Hitchin:87}. Indeed, denote by $K$ the Killing
vector field of the $S^1$--action and by $\mathcal L_{K}$ the Lie derivative.
It follows from (\ref{Eq_PermutingS1Action}) that %
$\mathcal L_{K}\,\om_2^{\ssst M}=\om_3^{\ssst M}$. Applying the Cartan formula,
one gets %
$\om_3^{\ssst M}=\mathcal L_{K}\,\om_2^{\ssst M}=d(\imath_{K}\om_2^{\ssst M})$. %
Therefore equality~(\ref{Eq_Weitzenboeck}) takes the following form:%
\begin{equation}\label{Eq_WeitzenboeckSimplified}
\frac 12\| u_* \|_{L_2}^2=\frac 14\| u_*-C(u_*)\|_{L_2}^2 -\int\limits_X
\om_1^{\ssst X}\wedge u^*\om_1^{\ssst M}.
\end{equation}

Further, since the action of $S^1$ is isometric, the energy functional%
\begin{equation}\label{Eq_EnergyFunctional}
E(u)=\frac 12 \| u_* \|_{L_2}^2=\frac 12\int\limits_X\| u_*\|^2\, dvol_X,\qquad
u:X\ra M
\end{equation}
is $S^1$--invariant.

Suppose now that $u$ is aholomorphic. In particular, $u$ is an absolute minimum of
the energy functional~(\ref{Eq_EnergyFunctional}) within its homotopy class
$\alpha=[u]$. Then for each $z\in S^1$ the map $u^z=L_z\circ u$ lies in the same
homotopy class $\alpha$ and is also an absolute minimum of the energy functional
within $\alpha$. From equation~(\ref{Eq_WeitzenboeckSimplified}) we conclude that
$u^z$ must be aholomorphic:%
\begin{equation*}
I_1u_*^zJ_1 + I_2u_*^zJ_2 + I_3u_*^zJ_3= u_*^z.
\end{equation*}
The above equation can be rewritten as%
\begin{equation*}
I_1u_*J_1 + \duzhky{L_{\bar z}}_*I_2\duzhky{L_{z}}_*\,u_*J_2 + %
\duzhky{L_{\bar z}}_*I_3\duzhky{L_{z}}_*\,u_*J_3= u_*.
\end{equation*}
In particular, for $z=-1$ we get $I_1u_*J_1 - I_2u_*J_2 - I_3u_*J_3= u_*$. Since $u$
satisfies \CRF equation~(\ref{Eq_CRF}), we obtain $I_1u_*J_1= u_*$, i.e. $u$ is
$(J_1, I_1)$--antiholomorphic. On the other hand, it easily follows from the
definition that any antiholomorphic map is also aholomorphic.
\end{proof}

An example of the circle action preserving one complex structure and rotating the
other two is the standard fiberwise action on the cotangent bundle %
$\mathrm T^*\mathbb P^n$ of the complex projective space $\mathbb P^n$ equipped with
the Calabi metric. More generally, Kaledin~\cite{Kaledin:97} and independently
Feix~\cite{Feix:01} constructed such metrics on a neighborhood of the zero section
in $T^*Z$ for real--analytic \kahler manifolds $Z$.

\begin{example}[Aholomorphic maps from a K3--surface into $\mathrm T^*\mathbb P^n$]\footnote{I am
grateful to I.Panin for helpful discussions on this example}%
It follows from Proposition~\ref{Prop_AholomorphicMapsAreAntiholom} that any
aholomorphic map $u: X\rightarrow \mathrm T^*\mathbb P^n$ must be $(J_1,
I_1)$--antiholomorphic. The standard antiholomorphic automorphism of $\mathbb P^n$
induces an $I_1$--antiholomorphic automorphism on $\mathrm T^*\mathbb P^n$ and
therefore we have a natural bijection between the spaces of holomorphic and
antiholomorphic maps into $\mathrm T^*\mathbb P^n$. Further, each holomorphic map
$u: X\rightarrow \mathrm T^*\mathbb P^n$ naturally decomposes into the projection
$\fee: X\rightarrow \mathbb P^n$ and a holomorphic section %
$s\in\Gamma(\fee^*\mathrm T^*\mathbb P^n)$. Each nonconstant holomorphic map $\fee$
into the projective space can be obtained as a morphism associated to a linear
system of a positive base point free divisor $D$ (see~\cite{GriffithsHarris:78} for
example). Then $\fee^*\mathcal O_{\mathbb P^n}(1)=\mathcal O_X(D)$ and the
pull--back to $X$ of the short exact sequence dual
to the Euler one has the following form:%
\begin{equation*}
0\rightarrow \fee^*\mathrm T^*\mathbb P^n\rightarrow (n+1)\mathcal O_X(-D)
\rightarrow \mathcal O_X\rightarrow 0.
\end{equation*}
It follows that the lift $s$ must be the zero section. Therefore our problem reduces
to description of the space of holomorphic maps $X\rightarrow\mathbb P^n$.

From now on we assume that $X$ is a K3--surface. Let a map $\fee=\fee_D:X\rightarrow
\mathbb P^n$ be given by the complete linear system $|D|=\mathbb P(H^0(\mathcal
O_X(D)))$ with empty base locus of a positive divisor $D$. Since $D^2$ is the degree
of $\fee$ we may assume $D^2\ge 0$. It turns out that base point free divisors $D$ on a
K3--surface admit a purely numeric characterization at least if $D$ is big ($D^2>0$).
Indeed, for a big divisor $D$ the associated morphism $\fee: X\rightarrow \mathbb P^n$ is
generically finite. It follows that $\dim \fee (X)=2$ and by Bertini's
theorem~\cite{BarthHulek:04} the divisor $D$ is linearly equivalent to a smooth
irreducible curve $C$ with $p_a(C)=1+ C^2/2=1+D^2/2>1$. It follows that $D$ is nef (numerically
effective) and by assumption $D$ is also big ($D^2>0$). On the other hand, the complete linear
system $|D|$
of a nef and big divisor $D$ has a non--empty base locus if and only if $D=kE+R$, where
$R$ is a smooth rational curve ($R^2=-2$), $E$ is a smooth elliptic curve ($E^2=0$),
$E\cdot R=1$ and $k\ge 2$~\cite{Friedman:98}. In case when $D$ is big, nef and base point
free the associated morphism is a map into the complex projective space of dimension
$N=1+D^2/2$. 

Now fix a positive integer $n$. Let $\fee :X\rightarrow\mathbb P^n$ be a holomorphic
map and $k$ be the dimension of the projective span of $\fee (X)$, that is $\fee$
decomposes as $i\circ\psi$, where $i:\mathbb P^k\hookrightarrow\mathbb P^n$ is a
standard embedding. Then $\psi$ is a morphism given by a projective basis of a
linear $k$--dimensional system of $D$ with $k\le N=\dim |D|=1+ D^2/2$. One can
describe $\psi$ equivalently as the composition of a morphism $\fee_D:
X\rightarrow\mathbb P^N$, given by a projective basis of the complete linear system
$|D|$, and a projection %
$\pi:\mathbb P^N\setminus\mathbb P^{N-k-1}\rightarrow\mathbb P^k$ with respect to a
subspace $\mathbb P^{N-k-1}\hookrightarrow\mathbb P^N$ that does not intersect the
image of $\fee_D$. Denote by $\mathbb P\mathrm V_{N-k}(\mathbb C^{N+1})$ the
projective Stiefel manifold, %
$\mathbb P\mathrm V_{N-k}(\mathbb C^{N+1})\xrightarrow{\, pr\,} Gr_{N-k}(\mathbb
C^{N+1})$. Then the space $\Psi (D,k)$ of all holomorphic maps $\psi:
X\rightarrow\mathbb P^k$ that can be obtained by a choice of $k+1$ linearly
independent sections of $\mathcal O_X(D)$ without base locus is the Zariski--open set%
\begin{equation}\label{Eq_PsiDk}
pr^{-1}\{\, V\in Gr_{N-k}(\mathbb C^{N+1})\ |\ [V]\cap \fee_D(X)=\emptyset\, \}\subset%
\mathbb P\mathrm V_{N-k}(\mathbb C^{N+1}).
\end{equation}
In particular, $\Psi (D, k)$ is connected.

Observe also that $\mathbb P^N\setminus\mathbb P^{N-k-1}$ is the total space of a
vector bundle over $\mathbb P^k\hookrightarrow\mathbb P^N$. Therefore
$\pi\circ\fee_D$ is homotopic to $\fee_D$. It follows that the spaces $\Psi(D,k)$
and $\Psi(D',k')$ lie in the same component of $\Map(X,\mathbb P^n)$ if and only if
$D=D'$.

Summing up the above considerations we get the following result.

\begin{proposition}
Let $X$ be a K3--surface and $n$ be a positive integer. Then the space of all
nonconstant aholomorphic maps $u: X\rightarrow \mathrm T^*\mathbb P^n$
(equivalently, the space of all holomorphic maps  $\fee: X\rightarrow \mathbb P^n$)
is the stratified space%
\begin{equation*}
\bigsqcup_{D,\, k} \Psi(D,k)\times Gr_{k+1}(\mathbb C^{n+1}),
\end{equation*}
where $D$ denotes a divisor class on $X$ such that the base locus of the complete
linear system $|D|$ is empty; $k$ is an integer, $1\le k\le n$; $\Psi(D,k)$ is a
Zariski--open subset (\ref{Eq_PsiDk}) of the projective Stiefel manifold $\mathbb
P\mathrm V_{N-k}(\mathbb C^{N+1}), N=1+D^2/2$.

The subsets $\Psi(D,k)\times Gr_{k+1}(\mathbb C^{n+1})$ and %
$\Psi(D',k')\times Gr_{k+1}(\mathbb C^{n+1})$ lie in the same component of $\Map (X,
\mathrm T^*\mathbb P^n)$ if and only if $D=D'$.\qed
\end{proposition}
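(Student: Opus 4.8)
The plan is to assemble the final Proposition from the structural results already established in the preceding discussion, which reduce the description of aholomorphic maps $u\colon X\to\mathrm T^*\mathbb P^n$ to a bookkeeping problem about holomorphic maps $X\to\mathbb P^n$ and their factorizations. First I would invoke Proposition~\ref{Prop_AholomorphicMapsAreAntiholom} to replace aholomorphic maps with $(J_1,I_1)$--antiholomorphic ones, and then use the antiholomorphic automorphism of $\mathrm T^*\mathbb P^n$ to identify these with holomorphic maps. The exact sequence argument already given forces the cotangent lift $s$ to be the zero section, so every holomorphic (hence every aholomorphic) map factors through the zero section $\mathbb P^n\hookrightarrow\mathrm T^*\mathbb P^n$; thus it suffices to parametrize nonconstant holomorphic maps $\fee\colon X\to\mathbb P^n$.

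Next I would organize these maps by their image data. Each nonconstant $\fee$ has a well--defined projective span of dimension $k$ with $1\le k\le n$, so it factors as $i\circ\psi$ with $\psi\colon X\to\mathbb P^k$ nondegenerate and $i\colon\mathbb P^k\hookrightarrow\mathbb P^n$ a linear embedding. The map $\psi$ corresponds to a base--point--free linear $k$--dimensional subsystem of a complete linear system $|D|$ for a divisor class $D$ with empty base locus, and the choice of the embedding $i$ amounts to a choice of $(k+1)$--dimensional subspace of $\mathbb C^{n+1}$, i.e.\ a point of $Gr_{k+1}(\mathbb C^{n+1})$. I would record that the datum of $\psi$ is exactly a point of $\Psi(D,k)$ as in (\ref{Eq_PsiDk}): projecting $\fee_D$ from a subspace $\mathbb P^{N-k-1}$ disjoint from $\fee_D(X)$ realizes every nondegenerate $\psi$, and such projection subspaces are precisely the fibres of $pr$ over the open set of $V\in Gr_{N-k}(\mathbb C^{N+1})$ whose projectivization misses $\fee_D(X)$. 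Assembling the span datum and the subsystem datum yields the set--theoretic bijection with the disjoint union $\bigsqcup_{D,k}\Psi(D,k)\times Gr_{k+1}(\mathbb C^{n+1})$, and the stratified structure is inherited from the stratification by $(D,k)$.

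For the second assertion I would use the homotopy computation already carried out, namely that $\pi\circ\fee_D$ is homotopic to $\fee_D$ because $\mathbb P^N\setminus\mathbb P^{N-k-1}$ is the total space of a vector bundle over $\mathbb P^k$ and hence deformation retracts onto it; consequently the homotopy class of $\psi$ within $\Map(X,\mathbb P^n)$ depends only on $D$ and not on the projection or on $k$. Since the linear embedding $i$ and the choice of point in $Gr_{k+1}(\mathbb C^{n+1})$ vary continuously and do not alter the underlying divisor class, two strata $\Psi(D,k)\times Gr_{k+1}(\mathbb C^{n+1})$ and $\Psi(D',k')\times Gr_{k+1}(\mathbb C^{n+1})$ lie in a common component of $\Map(X,\mathrm T^*\mathbb P^n)$ exactly when $D=D'$; the connectivity of each $\Psi(D,k)$ (a Zariski--open subset of the irreducible projective Stiefel manifold) guarantees that a single stratum is contained in one component.

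The main obstacle, and the point I would treat most carefully, is the passage from the abstract classification of linear subsystems to the concrete identification of $\Psi(D,k)$ with the explicit Zariski--open subset (\ref{Eq_PsiDk}) of the projective Stiefel manifold: one must check that every nondegenerate $\psi$ arises from a genuine projection with center disjoint from $\fee_D(X)$, that distinct choices of $(k+1)$ independent sections give distinct points of the Stiefel manifold, and that base--point freeness of the subsystem corresponds exactly to the disjointness condition $[V]\cap\fee_D(X)=\emptyset$. I would also verify that no aholomorphic map is inadvertently double--counted or omitted when $D$ is merely nef and big versus base--point free, using the numerical criterion from \cite{Friedman:98} to control which classes $D$ actually occur.
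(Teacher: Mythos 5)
Your proposal reproduces essentially the argument the paper itself gives: the proposition is stated as a summary of the preceding discussion, and you invoke the same chain --- Proposition~\ref{Prop_AholomorphicMapsAreAntiholom} plus the antiholomorphic automorphism to pass to holomorphic maps, the dual Euler sequence to kill the cotangent lift $s$, the factorization $\fee=i\circ\psi$ through the projective span, the identification of $\Psi(D,k)$ with the Zariski--open subset (\ref{Eq_PsiDk}), and the vector--bundle retraction of $\mathbb P^N\setminus\mathbb P^{N-k-1}$ onto $\mathbb P^k$ for the component count. The points you flag as needing care are exactly the ones the paper also treats only implicitly, so there is nothing further to add.
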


Observe that any holomorphic map $X\rightarrow \mathbb P^n$ can be obtained from a
morphism $\fee_D$, given by a primitive divisor $D$, composing it with holomorphic
maps between complex projective spaces. In the examples below we will indicate only
the primitive divisor classes.

For a K3--surface $X$ the Euclidean lattice $(H^2(X;\mathbb Z), \cup)$ is isomorphic
to $-2E_8\oplus 3H$ and is of rank 22. Notice that by the Torelli theorem a
K3--surface can be specified by its period point (see~\cite{BarthHulek:04} for
details). The natural map $c_1:Pic(X)\rightarrow H^2(X,\mathbb Z)$ is injective and
the rank $\rho$ of its image, the Neron--Severi group $NS(X)$, can take any integer
value between $0$ and $20$. We shall consider some examples for small values of
$\rho$ (the examples are taken from~\cite{Belcastro:02}).

$1)\; \rho=1, NS(X)\cong \langle 2k^2 \rangle$. We have just one primitive divisor
class $D$ with $D^2=2k^2$. The complete linear system is base point free and we get
a regular holomorphic map $\fee_D: X\rightarrow\mathbb P^{1+k^2}$.

$2)\;\rho=2, NS(X)\cong H$. In this case the two generators $E_1$ and $E_2$ are
elliptic curves. The complete linear system $|E_i|$ is base point
free~\cite{Friedman:98} and defines a regular map $\fee_{E_i}:X\rightarrow \mathbb
P^1$. Any other positive primitive divisor class has the form $D=pE_1+qE_2$, where
$p$ and $q$ are positive coprime integers. The base locus of $D$ is empty and
therefore we get a map $\fee_D:X\rightarrow\mathbb P^{2pq}$.

$3)\;\rho=2, NS(X)\cong
  \begin{pmatrix}
    2 & \phantom{-}1 \\
    1 & -2
  \end{pmatrix}
$. Let $C$ and $R$ be generators, $C^2=2, R^2=-2, C\cdot R=2$. It is easy to check
that there are no divisors with vanishing self--intersection number. A divisor
$D=pC+qR$ is nef iff $p\ge q$. In this case $D$ is also big. Consequently, for %
coprime $p$ and $q,\ p\ge q$ the divisor $D$ is primitive with empty base locus and
we get a holomorphic map $\fee_D: X\rightarrow \mathbb P^{1+p^2+2pq-q^2}$.
\end{example}


\section{Harmonic spinors as twisted aholomorphic maps.}\label{Sect_HarmonicSpinors}

In this section we establish connection between aholomorphic maps and (generalized)
harmonic spinors. Before proceeding let us briefly outline the definition of the
(generalized) Dirac operator in a form suitable for our purposes. Details can be
found in the paper of Pidstrygach~\cite{Pidstrygach:04}, where the Dirac operator is
defined in a slightly more general context.

\textbf{Algebraic preliminaries.} Recall that the group $Spin(4)$ is isomorphic to
the product of two copies of $Sp(1)$:
\begin{equation*}
Spin(4)=Sp_+(1)\times Sp_-(1),
\end{equation*}
where we use subscripts $"\pm"$ to distinguish between different copies. The
isomorphism $Spin(4)/\pm 1\cong SO(4)$ is given by%
\begin{equation}\label{Eq_Spin4SO4}
(q_+, q_-)\mapsto B_{q_+,q_-}:\mathbb H\ra \mathbb H,\quad B_{q_+,q_-}h=q_-h\bar
q_+.
\end{equation}
Recall that $W$ denotes the $Sp(1)$--representation on $\mathbb H$ by multiplication
on the left. Further, $\mathbb R^4$ denotes the standard $SO(4)$--representation.
The following homomorphism of $Spin(4)=Sp(1)_+\times Sp_-(1)$--representations%
\begin{equation*}
\mathbb R^4\otimes W^+\ra W^-, \qquad h_1\otimes h_2\mapsto h_1h_2,
\end{equation*}
is called the Clifford multiplication, where $W^\pm$ denote the representations
induced by $Sp_\pm(1)$.

\medskip

Let $(V, I_1, I_2, I_3)$ be a quaternionic vector space. We identify the subspace
$\lspan (I_1, I_2, I_3)\subset End_{\mathbb R}(V)$  with $\imH=\mathfrak{sp}(1)$.

\begin{lemma}\label{Prop_WasFactorInRepresentation}
Let $V$ be a real representation of the group $Sp(1)$ such that the induced action
on $End_{\mathbb R}(V)$ preserves $\lspan(I_1, I_2, I_3)$. Assume that the induced
representation on $\lspan(I_1, I_2, I_3)$ coincides with the adjoint one. Then%
\begin{equation}\label{Eq_WasFactorInRepresentation}
\c V\cong W\otimes E,
\end{equation}
where $E$ is an $Sp(1)$--representation.
\end{lemma}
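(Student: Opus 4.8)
The plan is to exploit the \emph{two} commuting $Sp(1)$--actions that the hypotheses secretly provide and to recognize the statement as an instance of the $E$--$H$ formalism for quaternionic structures. Write $\sigma_g\in GL_{\mathbb R}(V)$ for the given $Sp(1)$--representation and, for $q\in\mathbb H$, let $L_q\in End_{\mathbb R}(V)$ denote left multiplication by $q$, so that $I_l=L_{i_l}$ (with $i_1=i,\,i_2=j,\,i_3=k$) and $L:\mathbb H\ra End_{\mathbb R}(V)$ is an algebra homomorphism making $V$ a free left $\mathbb H$--module. The restriction $\rho_g:=L_g$, $g\in Sp(1)$, is then a \emph{second} $Sp(1)$--action on $V$. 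The first observation is that conjugation by $\rho_g$ already realizes the adjoint action on $\lspan(I_1,I_2,I_3)\cong\imH$, since $\rho_g L_{i_l}\rho_g^{-1}=L_{g\,i_l\,\bar g}$. As by hypothesis $\sigma$ induces the \emph{same} (adjoint) action there, I obtain
\[
\sigma_g L_q \sigma_g^{-1}=L_{g q \bar g}\qquad\text{for all }q\in\mathbb H,\ g\in Sp(1),
\]
first for $q\in\imH$ and then, by $\mathbb R$--linearity together with $L_1=\mathrm{id}$, for every $q$.

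The key step is to peel the quaternionic $Sp(1)$ off of $\sigma$. Set $\theta_g:=L_{\bar g}\,\sigma_g=\rho_g^{-1}\sigma_g$. A direct check using the displayed relation shows that (i) each $\theta_g$ is $\mathbb H$--linear, i.e.\ commutes with every $L_q$; (ii) $\theta$ is a homomorphism, because $\theta_g\theta_h=L_{\bar g}\,\sigma_g L_{\bar h}\,\sigma_h=L_{\bar g}L_{g\bar h\bar g}\,\sigma_{gh}=L_{\overline{gh}}\,\sigma_{gh}=\theta_{gh}$; and (iii) $\theta$ therefore commutes with $\rho$. Thus $V$ carries commuting $Sp(1)_\rho\times Sp(1)_\theta$--actions, with $Sp(1)_\rho$ acting by left quaternion multiplication and $Sp(1)_\theta\subset GL_{\mathbb H}(V)$ acting by $\mathbb H$--linear automorphisms, and the original representation is the diagonal $\sigma_g=\rho_g\theta_g$.

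Now I complexify and use freeness. As an $Sp(1)_\rho$--module, $V$ is a direct sum of copies of $(\mathbb H,L)$, whose complexification splits as $W\oplus W$; hence $\c{V}$, restricted to $Sp(1)_\rho$, is isotypic of type $W$ (only the fundamental representation occurs). Decomposing the commuting $Sp(1)_\rho\times Sp(1)_\theta$--module $\c{V}$ into irreducibles $A\boxtimes B$ and noting that each restriction to $Sp(1)_\rho$ is $A^{\oplus\dim B}$, the $\rho$--isotypy forces $A\cong W$ in every summand. This yields $\c{V}\cong W\boxtimes E$ for some $Sp(1)_\theta$--representation $E$. Restricting along the diagonal $g\mapsto(g,g)$ converts the external tensor product into the ordinary one and turns $\sigma=\rho\theta$ into the diagonal action, giving $\c{V}\cong W\otimes E$ as required.

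The only genuinely delicate point is verifying that $\theta$ is a bona fide homomorphism taking values in $\mathbb H$--linear maps; this is exactly where \emph{both} hypotheses — preservation of $\lspan(I_1,I_2,I_3)$ and its identification with the adjoint representation — are used, through the relation $\sigma_g L_q \sigma_g^{-1}=L_{g q \bar g}$. Everything afterwards, namely the freeness argument that kills all higher symmetric powers on the $\rho$--factor and the passage to the diagonal, is formal $Sp(1)$--representation theory.
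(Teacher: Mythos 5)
Your proof is correct. It is worth noting, though, that it follows a different route from the paper's. The paper's own argument is a one--line direct computation: one identifies $\c V\cong\mathbb C^2\tensor C V_{I_1}$ (where $V_{I_1}$ is $V$ viewed as a complex vector space via $I_1$) and simply computes the actions of $i,j,k\in Sp(1)$ on this model to exhibit the tensor--product structure. You instead argue structurally: from the hypothesis you extract the intertwining relation $\sigma_g L_q\sigma_g^{-1}=L_{gq\bar g}$, use it to ``untwist'' the given action into a product of two commuting $Sp(1)$--actions $\rho_g=L_g$ (quaternionic left multiplication) and $\theta_g=L_{\bar g}\sigma_g$ (the $\mathbb H$--linear part), and then invoke the isotypic decomposition of the $Sp(1)\times Sp(1)$--module $\c V$ together with the freeness of $V$ as an $\mathbb H$--module to force every external factor to be $W$. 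All the steps check out: the verifications that $\theta$ is a homomorphism and that each $\theta_g$ is $\mathbb H$--linear are exactly where both hypotheses enter, and the passage from $W\boxtimes E$ to $W\otimes E$ by restricting to the diagonal is standard. What your approach buys is a conceptual identification of $E$ as the multiplicity space $\hom{R}{W}{\c V}^{Sp(1)_\rho}$ carrying the residual $\theta$--action (equivalently, it makes transparent why $E$ coincides with $(V,I_1)$ as a complex vector space while carrying a twisted $Sp(1)$--action, a point the paper only remarks on later); the paper's computation is shorter but leaves this structure implicit.
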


The proof is a straightforward calculation of the actions of $i,j,k\in Sp(1)$ on %
$\mathbb C\tensor R V\cong\mathbb C^2\tensor CV_{I_1}$, where $V_{I_1}$ denotes the
complex vector space $(V, I_1)$.

From (\ref{Eq_WasFactorInRepresentation}) one obtains a variant of the Clifford
multiplication:%
\begin{equation}\label{Eq_ComplCliffordMultiplication}
\c{\mathbb R}^4\otimes\c V\cong \c{\mathbb R}^4\otimes W^+\otimes E\ra W^-\otimes E.
\end{equation}
Observe that the homomorphism (\ref{Eq_ComplCliffordMultiplication}) induces an
homomorphism between real parts:%
\begin{equation*}
\mathbb R^4\otimes V\ra [W^-\otimes E]_r.
\end{equation*}

\textbf{The nonlinear Dirac operator.} From now on $X^4$ is a smooth closed oriented
Riemannian manifold. We also assume that $X$ is a spin manifold with a
$Spin(4)$--principal bundle $\pi :P\ra X$. The assumption that $X$ is spin is not
essential, since the bundle $P$ can be replaced by a $Spin^c(4)$--bundle,  however
the exposition becomes a bit clearer in case of spin manifolds.

Let $(M, I_1, I_2, I_3)$ be a \hK manifold with permuting action of $Sp(1)$ (see
Definition~\ref{Defn_PermutingAction}). Then the group $Spin(4)$ acts on $M$ via the
homomorphism $Spin(4)\ra Sp_+(1)$ and we get the associated fibre bundle:%
\begin{equation*}
\mathbb{M}=P\times_{Spin(4)} M.
\end{equation*}
Observe that in case $M=\mathbb H$ with the standard action of $Sp(1)$ one gets the
usual positive spinor bundle $\mathcal W^+$. Therefore sections of $\mathbb M$ are
called (generalized) spinors. Notice that the space of spinors can be naturally
identified
with the space of equivariant maps:%
\begin{equation*}
\Gamma(\mathbb M)\cong \Map^{Spin(4)}(P, M).
\end{equation*}
The Levi--Civita connection on $P$ determines the covariant derivative:%
\begin{equation*}
\nabla_{\mrm v}\,u=u_*(\hat{\mrm v}),\qquad u\in\Map^{Spin(4)}(P, M)
\end{equation*}
where $\hat{\mrm v}$ denotes the horizontal lift of $\mrm v\in TX$.
In other words we get a map%
\begin{equation*}
\nabla: \Map^{Spin(4)}(P, M)\ra\Gamma(T^*X\otimes \pi_!(u^*TM)),
\end{equation*}
where $\pi_!(u^*TM)\ra X$ denotes the factor of $u^*TM\ra P$ by the group action.

\begin{remark}\label{Rem_CovDerivativeAsSection}
Strictly speaking, the operator $\nabla$ is not well--defined, since its range
depends on the element of the domain. However one can define $\nabla$ as a section
of a certain vector bundle as follows. Consider the evaluation map $ev:
\Map^{Spin(4)}(P,M)\times P\ra M,\ (u,p)\mapsto u(p)$.
Then one gets the following diagram%
\begin{displaymath}
 \begin{CD}
 ev^*TM  @> > > TM  \\
 @VVV          @VVV\\
 \Map^{Spin(4)}(P,M)\times P @>ev > > M.
 \end{CD}
\end{displaymath}
Dividing the first column by the group action one gets a vector bundle %
$\mathcal E$ over an the infinite dimensional space
$\Map^{Spin(4)}(P,M)\times X$; the restriction of $\mathcal E$ to %
$\{ u \}\times X$ coincides with $\pi_!(u^*TM)$. Then $\nabla$ is well--defined as a
section of $\mathcal E\ra \Map^{Spin(4)}(P,M)\times X$.

In order to keep the exposition clear, we will not keep to the above formalism of
vector bundles over infinite--dimensional spaces.
\end{remark}

The tangent space of $M$ has a natural structure of quaternionic vector space at
each point. Since we have a permuting action of $Sp(1)$ on $M$, we get from the
Proposition~\ref{Prop_WasFactorInRepresentation} that $\c TM\cong W^+\otimes\tilde
E$, where $\tilde E$ is a complex vector bundle over $M$ with an action of $Sp(1)$
($\tilde E$ coicides with $(TM, I_1)$ as a complex vector bundle, however the action
of $Sp(1)$ is different from the induced one). Consequently, we
get the Clifford multiplication%
\begin{equation}\label{Eq_CliffordMultiplicationOnBundles}
Cl: T^*X\otimes\pi_!(u^*TM)\ra [\mathcal W^-\otimes E]_r,
\end{equation}
where $E=\pi_!(u^*\tilde E)$ and $\mathcal W^-$ is the negative spinor bundle over
$X$.

\begin{definition}[\cite{Pidstrygach:04}]\label{Defn_DiracOperator}
The first order differential operator $\dirac$ defined by the sequence%
\begin{equation*}\label{Eq_DiracOperator}
\dirac : \Gamma\duzhky{\mathbb M}\xrightarrow{\ \nabla\ }%
\Gamma\bigl (T^*X\otimes\pi_!(u^*TM)\bigr )\xrightarrow{\, Cl\ }%
\Gamma \bigl ([ \mathcal W^-\otimes E]_r \bigr )
\end{equation*}
is called a (generalized) \textit{Dirac operator}.
\end{definition}

\begin{definition}
A spinor $u$ such that $\dirac u=0$ is called \textit{harmonic}.
\end{definition}

\begin{remark}
The Dirac operator is well--defined as a section of a vector bundle over an
infinite--dimensional space similarly as the covariant derivative (see
Remark~\ref{Rem_CovDerivativeAsSection}) and it is a Fredholm
section~\cite{Pidstrygach:04}. In case when the fibre $M$ of the spinor bundle is a
copy of quaternions with the standard action of $Sp(1)$ one recovers the usual
linear Dirac operator.
\end{remark}

\begin{remark}
Notice that if the target \hK manifold admits a permuting action of $SO(3)$ rather
then $Sp(1)$ one needs just the principal $SO(4)$--bundle of orthonormal frames
rather then its $Spin(4)$--lifting to define the Dirac operator. A well--known
example in classical theory is $d^+ +d^*: \Om^1(X)\ra\Om^2_+(X)\oplus\Om^0(X)$.

One can also define a Dirac operator with the help of a $Spin^c(4)$--structure. In
this case the target manifold $M$ is required to carry a triholomorphic action of
$S^1$ commuting with the permuting action of $Sp(1)$.
\end{remark}

The Levi--Civita connection splits $TP$ into horizontal and vertical bundles:
$TP\cong \mathcal H\oplus\mathcal V$. Since we have a natural projection $pr: P\ra
P_{\ssst{SO}}$ onto the principal bundle of orthonormal frames of $X$, the
horizontal bundle $\mathcal H$ has a natural quaternionic structure  $(J_1, J_2,
J_3)$, which is defined as follows: a point $p\in P$ determines an orthonormal basis
$\mrm v=pr (p)=(\mrm v_0, \mrm v_1, \mrm v_2, \mrm v_3)$ of $T_{\pi (p)}X$ and
consequently a basis $\hat{\mrm v}$ of $\mathcal H_p$; then $(J_1, J_2, J_3)$ is
defined as the unique quaternionic structure\footnote{the basis $\mrm v$ can be
viewed as an isomorphism $T_xX\cong\mathbb H$; then the quaternionic structure on
$\mathbb H$, compatible with the isomorphism~(\ref{Eq_Spin4SO4}),  is the
\textit{right} one} such that $\hat{\mrm v}_l= -J_l\hat{\mrm v}_0$ for $l=1,2,3$.

\begin{theorem}
For a map $u\in\Map^{Spin(4)}(P, M)$ denote by $u_*^h$ the restriction of the
differential $u_*$ to the horizontal subbundle $\mathcal H\subset TP$. Then the
spinor $u$ is harmonic if and only if the Cauchy--Riemann--Fueter--type equation holds:%
\begin{equation*}
I_1u_*^hJ_1 + I_2u_*^hJ_2+I_3u_*^hJ_3=u_*^h.
\end{equation*}
\end{theorem}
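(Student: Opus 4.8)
The plan is to unwind the definition of the Dirac operator $\dirac$ and show that the condition $\dirac u = 0$ is, after identifying the relevant representations, precisely the \CRF equation on the horizontal subbundle. The key observation is that the Dirac operator is the composition $Cl\circ\nabla$, where $\nabla_{\mrm v}u = u_*(\hat{\mrm v})$ evaluates the differential on horizontal lifts. Thus $\dirac u$ is obtained by applying the Clifford multiplication~(\ref{Eq_CliffordMultiplicationOnBundles}) to $u_*^h\in\Gamma(T^*X\otimes\pi_!(u^*TM))$, and the heart of the argument is to compute this Clifford contraction explicitly in a horizontal frame and recognize its kernel.

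First I would fix a point $p\in P$, let $\mrm v = pr(p) = (\mrm v_0,\mrm v_1,\mrm v_2,\mrm v_3)$ be the associated orthonormal frame, and use the horizontal lifts $\hat{\mrm v}_0,\dots,\hat{\mrm v}_3$ as a basis of $\mathcal H_p$. By the footnoted definition of the quaternionic structure on $\mathcal H$, these lifts satisfy $\hat{\mrm v}_l = -J_l\hat{\mrm v}_0$ for $l=1,2,3$, so that the four values $u_*^h(\hat{\mrm v}_0),\dots,u_*^h(\hat{\mrm v}_3)$ of the differential are related to a single tangent vector $\xi = u_*^h(\hat{\mrm v}_0)\in T_{u(p)}M$ together with its $I_l$-twists via $u_*^h(\hat{\mrm v}_l) = -u_*^h(J_l\hat{\mrm v}_0)$. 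Next I would recall, from the construction leading to~(\ref{Eq_ComplCliffordMultiplication}) and~(\ref{Eq_CliffordMultiplicationOnBundles}), that the splitting $\c{TM}\cong W^+\otimes\tilde E$ of Lemma~\ref{Prop_WasFactorInRepresentation} is exactly what turns the target tangent bundle into a module over which Clifford multiplication by $\mathbb R^4\cong\imH\oplus\mathbb R$ acts by left quaternion multiplication combined with the complex structures $I_l$. Writing the Clifford multiplication $h_1\otimes h_2\mapsto h_1 h_2$ in the frame $\mrm v$ and contracting over the four basis directions, the sum $\sum_l \mrm v_l\cdot\bigl(u_*^h(\hat{\mrm v}_l)\bigr)$ reorganizes, through the relations $\hat{\mrm v}_l=-J_l\hat{\mrm v}_0$, into an expression built from $u_*^h(\hat{\mrm v}_0)$ and the terms $I_l\,u_*^h\,J_l(\hat{\mrm v}_0)$.

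The crux is then to match the quaternionic bookkeeping so that the Clifford contraction $Cl(u_*^h)$ vanishes if and only if $I_1 u_*^h J_1 + I_2 u_*^h J_2 + I_3 u_*^h J_3 = u_*^h$ holds as an identity of maps $\mathcal H\to TM$. The cleanest route is to argue at the level of the operator $C$ of~(\ref{Eq_OperatorC}): the image of Clifford multiplication lands in $[\mathcal W^-\otimes E]_r$, which by the decomposition of Proposition~\ref{Prop_HomDecomposition} corresponds to the $B_+$ (aquaternionic) summand, while the kernel of $Cl$ on the relevant block corresponds to the quaternion-linear component $B_-=\hom HUV$. Concretely, one shows that $Cl(u_*^h)$ is a nonzero multiple of the $B_-$-projection $\tfrac14(u_*^h - C(u_*^h)) = \tfrac14\bigl(u_*^h - (I_1 u_*^h J_1 + I_2 u_*^h J_2 + I_3 u_*^h J_3)\bigr)$ of $u_*^h$, so that $\dirac u = 0$ is equivalent to the vanishing of this quaternion-linear component, which by the Theorem characterizing aholomorphic maps is exactly the \CRF equation on $\mathcal H$.

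I expect the main obstacle to be the sign and ordering conventions in the identification~(\ref{Eq_WasFactorInRepresentation}): one must verify that the splitting $\c{TM}\cong W^+\otimes\tilde E$ intertwines the abstract Clifford multiplication~(\ref{Eq_CliffordMultiplicationOnBundles}) with the concrete left multiplication by $\mrm v_l$ in such a way that the contraction over the horizontal frame produces precisely $u_*^h - C(u_*^h)$ and not some other real linear combination. In particular, the footnote's choice of the \emph{right} quaternionic structure on $\mathcal H$ (via~(\ref{Eq_Spin4SO4}), where $B_{q_+,q_-}h = q_- h\bar q_+$) is what makes the relations $\hat{\mrm v}_l=-J_l\hat{\mrm v}_0$ compatible with the left-multiplication Clifford map $W^+\to W^-$, and getting this compatibility exactly right is the delicate computational point. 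Once the frame computation confirms that $Cl(u_*^h)$ is proportional to the $B_-$-projection, the equivalence is immediate and the theorem follows.
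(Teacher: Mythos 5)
Your overall strategy is the same as the paper's: unwind $\dirac=Cl\circ\nabla$ at a point of $P$, express the Clifford contraction in the horizontal frame $\hat{\mrm v}_0,\dots,\hat{\mrm v}_3$, use $\hat{\mrm v}_l=-J_l\hat{\mrm v}_0$ to reorganize the sum into $u_*^h-C(u_*^h)$ evaluated on $\hat{\mrm v}_0$, and conclude that $\dirac u=0$ is the vanishing of the quaternion--linear component of $u_*^h$. You also correctly isolate the delicate point (the sign/ordering conventions in $\c{TM}\cong W^+\otimes\tilde E$ and the choice of the \emph{right} quaternionic structure on $\mathcal H$), which is exactly where the paper spends its effort: it writes the map (\ref{Eq_CliffordMultiplSequence}) explicitly as $h\otimes w\mapsto h\cdot 1\otimes w-h\cdot j\otimes I_2w$, sums over the frame, and reduces the resulting two-component equation to the single equation $u_*^h(\hat{\mrm v}_0)+\sum_l I_lu_*^h(\hat{\mrm v}_l)=0$.

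There is, however, one sentence that is genuinely wrong and, taken at face value, would invert the theorem: you say the image of $Cl$ ``corresponds to the $B_+$ (aquaternionic) summand, while the kernel of $Cl$ \ldots corresponds to the quaternion--linear component $B_-$.'' This is backwards. With $U=\mathbb H$ and $V=T_mM$ ($\dim_{\mathbb H}V=n$), the source $\mathbb R^4\otimes V\cong\hom RUV$ has real dimension $16n$, the summand $B_-\cong\hom HUV$ has dimension $4n$, $B_+$ has dimension $12n$, and the target $[W^-\otimes E]_r$ has real dimension $4n$; so $[W^-\otimes E]_r$ can only be identified with $B_-$, the kernel of $Cl$ is $B_+$, and $Cl$ is injective on the quaternion--linear summand. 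If the kernel were $B_-$, then $\dirac u=0$ would say that $u_*^h$ \emph{is} quaternion--linear, the opposite of the Cauchy--Riemann--Fueter equation. Your very next sentence (that $Cl(u_*^h)$ is, under the identification, a nonzero multiple of the $B_-$--projection $\tfrac14\bigl(u_*^h-C(u_*^h)\bigr)$) states the correct fact and is what the conclusion actually rests on, so the slip is self-correcting; but it should be fixed, and the claim it corrects to is precisely the frame computation you defer — which is the entire analytic content of the paper's proof and still needs to be carried out, including the final observation that a quaternion--linear map on $\mathcal H_p$ vanishing on $\hat{\mrm v}_0$ vanishes identically because $\hat{\mrm v}_0$ generates $\mathcal H_p$ over $\mathbb H$.
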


\begin{proof}
Pick a point $p\in P$ and denote by $\mrm v=pr(p)$ the basis of $TX$ at the point
$x=\pi(p)$ as before. Let $u\in\Map^{Spin(4)}(P, M)$ be a harmonic spinor,
$m=u(p)\in M$.

The pull--back of the Clifford
multiplication~(\ref{Eq_CliffordMultiplicationOnBundles}) to $P$ can be described by
the following sequence
(see~(\ref{Eq_WasFactorInRepresentation}),(\ref{Eq_ComplCliffordMultiplication})):%
\begin{equation}\label{Eq_CliffordMultiplSequence}
\begin{aligned}
\mathbb R^4\otimes u^*TM \hookrightarrow &\c{\duzhky{\mathbb R^4\otimes u^*TM}}\cong%
\c{\mathbb R}^4\otimes u^*\c TM\\%
\cong & (W^-\otimes W^+)\otimes (W^+\otimes TM) \ra%
W^-\otimes TM.
\end{aligned}
\end{equation}
Notice that the image of the above map lies automatically in the real part of %
$\, W^-\otimes TM$. Further,  $\mathbb R^4$ and $W^\pm$ are isomorphic to $\mathbb
H$ as vector spaces. Then the
homomorphism~(\ref{Eq_CliffordMultiplSequence}) is the following map:%
\begin{equation*}
h\otimes w\mapsto h\cdot 1\otimes w - h\cdot j\otimes I_2w,\qquad h\in\mathbb H,\
w\in TM.
\end{equation*}

Observe that $h=1$\,(resp. $i,j,k$) corresponds to the horizontal lift of $\mrm
v_0$\,(resp. $\mrm v_1,\mrm v_2,\mrm v_3$). According to the definition of the Dirac
operator take %
$h=1, w=\nabla_{\hat{\mrm v}_0}\, u=u_*(\hat{\mrm v}_0)=u_*^h (\hat{\mrm v}_0);\ %
h=i, w=u_*^h (\hat{\mrm v}_1)\dots$ %
and sum up the result. It follows that $u$ is harmonic at
the point $x$ if and only if the following equation holds%
\begin{align*}
&\duzhky{1\otimes u_*^h (\hat{\mrm v}_0)-j\otimes I_2 u_*^h (\hat{\mrm v}_0)}+%
\duzhky{i\otimes  u_*^h (\hat{\mrm v}_1)-k\otimes I_2u_*^h (\hat{\mrm v}_1)}+\\%
+&\duzhky{j\otimes u_*^h (\hat{\mrm v}_2) + 1\otimes I_2 u_*^h (\hat{\mrm v}_2)}+%
\duzhky{k\otimes u_*^h (\hat{\mrm v}_3)+i\otimes I_2 u_*^h (\hat{\mrm v}_3)}=0.
\end{align*}
After a simplification one gets%
\begin{equation}\label{Eq_DopHarmonicity}
\begin{aligned}
&1\otimes \left ( u_*^h (\hat{\mrm v}_0)+I_1u_*^h (\hat{\mrm v}_1)+I_2u_*^h (\hat{\mrm
v}_2)+I_3u_*^h (\hat{\mrm v}_3)  \right )\\
+& j\otimes \left ( -I_2u_*^h (\hat{\mrm v}_0)+I_3u_*^h (\hat{\mrm v}_1)+u_*^h(\hat{\mrm
v}_2)-I_1u_*^h (\hat{\mrm v}_3)  \right )=0.%
\end{aligned}
\end{equation}
It is easy to see that~(\ref{Eq_DopHarmonicity}) is equivalent to the single
equation $u_*^h (\hat{\mrm v}_0)+I_1u_*^h (\hat{\mrm v}_1)+I_2u_*^h (\hat{\mrm
v}_2)+I_3u_*^h (\hat{\mrm v}_3)=0$. Recalling that %
$\hat{\mrm v}_l=-J_l\hat{\mrm v}_0,\ l=1,2,3$ we get%
\begin{equation*}
\tilde C(u_*^h)\,\hat{\mrm v}_0=u_*^h \hat{\mrm v}_0-I_1u_*^h J_1 \hat{\mrm
v}_1-I_2u_*^h J_2 \hat{\mrm v}_2-I_3u_*^h J_3 \hat{\mrm v}_3=0.
\end{equation*}
Observe that for any fixed $l=1,2,3$ the equations $\tilde C(u_*^h)\,\hat{\mrm
v}_0=0$ and $\tilde C(u_*^h)\,J_l\hat{\mrm v}_0=0$ are equivalent. It remains to
note that $\hat{\mrm v}_0, J_1\hat{\mrm v}_0, J_2\hat{\mrm v}_0$ and $J_3\hat{\mrm
v}_0$ span the horizontal subspace at the point $p\in P$ and therefore %
$\tilde C(u_*^h)=0$.
\end{proof}

\begin{corollary}
A spinor $u$ is harmonic if and only if the horizontal part $u_*^h$ of its
differential has no quaternion--linear component.\qed
\end{corollary}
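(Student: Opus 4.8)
The plan is to turn the equation $\dirac u=0$ into a pointwise identity on the horizontal subbundle and then to recognize that identity as the aquaternionic condition $C(u_*^h)=u_*^h$ for the operator $C$ of~(\ref{Eq_OperatorC}), i.e.\ the \CRF equation of the statement. I would fix a point $p\in P$ so that $\mrm v=pr(p)=(\mrm v_0,\mrm v_1,\mrm v_2,\mrm v_3)$ is an orthonormal basis of $T_xX$, $x=\pi(p)$, and the horizontal lifts $\hat{\mrm v}_0,\dots,\hat{\mrm v}_3$ form a basis of $\mathcal H_p\cong\mathbb H$. By the definition of $\nabla$ the covariant derivative has components $\nabla_{\mrm v_l}u=u_*^h(\hat{\mrm v}_l)$, so evaluating $\dirac u$ at $p$ amounts to applying the Clifford multiplication to $\sum_l\mrm v_l^*\otimes u_*^h(\hat{\mrm v}_l)$.

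The crux is to make the Clifford multiplication~(\ref{Eq_CliffordMultiplicationOnBundles}) explicit. Pulling it back to $P$ and unwinding the isomorphism $\c TM\cong W^+\otimes\tilde E$ of Lemma~\ref{Prop_WasFactorInRepresentation} together with the standard multiplication $\mathbb R^4\otimes W^+\ra W^-$, I expect, after identifying $\mathbb R^4$ and $W^\pm$ with $\mathbb H$, the real Clifford multiplication to take the form $h\otimes w\mapsto h\cdot 1\otimes w-h\cdot j\otimes I_2w$. This is the step I expect to be the main obstacle: one must track how $I_2$ enters, since it is $I_2$ that exchanges the two summands of $\c V\cong W\otimes E$ and thereby pins down the embedding $V\hookrightarrow[W^+\otimes\tilde E]_r$, and one must check that the image indeed lands in the real part of $W^-\otimes TM$.

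With this formula available I would substitute $h=1,i,j,k$ (for $\hat{\mrm v}_0,\dots,\hat{\mrm v}_3$) together with $w=u_*^h(\hat{\mrm v}_l)$, sum, and collect the result according to the basis $1,i,j,k$ of $W^-\cong\mathbb H$. The equation $\dirac u=0$ then splits into the vanishing of each $W^-$-component, and the quaternion relations among $I_1,I_2,I_3$ collapse these to the single identity
\begin{equation*}
u_*^h(\hat{\mrm v}_0)+I_1u_*^h(\hat{\mrm v}_1)+I_2u_*^h(\hat{\mrm v}_2)+I_3u_*^h(\hat{\mrm v}_3)=0.
\end{equation*}

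Finally I would use $\hat{\mrm v}_l=-J_l\hat{\mrm v}_0$ to rewrite this as $\tilde C(u_*^h)\,\hat{\mrm v}_0=0$, where $\tilde C(A)=A-I_1AJ_1-I_2AJ_2-I_3AJ_3=A-C(A)$. It remains to promote the vanishing at the single vector $\hat{\mrm v}_0$ to the vanishing of $\tilde C(u_*^h)$ on all of $\mathcal H_p$. For this I would record the intertwining relation $\tilde C(A)J_l=\pm I_l\tilde C(A)$ (the sign depending on conventions and immaterial here), a direct consequence of the quaternion relations, which gives $\tilde C(u_*^h)J_l\hat{\mrm v}_0=\pm I_l\tilde C(u_*^h)\hat{\mrm v}_0=0$; since $\hat{\mrm v}_0,J_1\hat{\mrm v}_0,J_2\hat{\mrm v}_0,J_3\hat{\mrm v}_0$ span $\mathcal H_p$, this yields $\tilde C(u_*^h)=0$, equivalently $C(u_*^h)=u_*^h$, which is precisely the asserted equation. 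As every step in the reduction is reversible, the converse implication follows as well.
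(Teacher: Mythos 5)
Your argument is correct and follows the paper's route essentially verbatim: it reconstructs the proof of the preceding Theorem, namely the explicit form $h\otimes w\mapsto h\cdot 1\otimes w-h\cdot j\otimes I_2 w$ of the Clifford multiplication, the reduction of $\dirac u=0$ to the single equation $u_*^h(\hat{\mrm v}_0)+I_1u_*^h(\hat{\mrm v}_1)+I_2u_*^h(\hat{\mrm v}_2)+I_3u_*^h(\hat{\mrm v}_3)=0$ (your intertwining relation $\tilde C(A)J_l=I_l\tilde C(A)$ is indeed what makes the span argument and the collapse of the two $W^-$--components work), and the conclusion $\tilde C(u_*^h)=0$. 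The only step belonging properly to this Corollary rather than to the Theorem --- passing from the eigenvalue equation $C(u_*^h)=u_*^h$ to the vanishing of the quaternion--linear component, which rests on the eigenspace decomposition $\hom RUV=\hom HUV\oplus B_+$ of Proposition~\ref{Prop_HomDecomposition} --- is left implicit in your phrase ``the aquaternionic condition'' and should be cited explicitly to close the argument.
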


%
%

The above Corollary reveals a deep analogy between the Dirac operator and the
$\partial$--operator of complex geometry. Indeed, let $(Y, I_{\ssst Y})$ and $(Z,
I_{\ssst Z})$ be complex manifolds. Assume that a Lie group $G$ acts holomorphically
on $Z$ and pick a $G$--principal bundle $\pi^{\ssst G}: P_{\ssst G}\ra Y$ with a
connection.
Then the operator $\partial$ can be defined in the usual way, namely%
\begin{equation*}
\partial : \Gamma(P_{\ssst G}\times_G Z)\xrightarrow{\ \nabla\ }%
    \Om^1(Y)\otimes\Gamma\bigl ( \pi_!^{\ssst G}(u^*TZ)\bigr )\ra%
    \Om^{1,0}(Y)\otimes\Gamma\bigl ( \pi_!^{\ssst G}(u^*TZ)\bigr ).
\end{equation*}

Further, we also have a splitting of the tangent bundle of $P_{\ssst G}$ into
vertical and horizontal parts: $TP_{\ssst G}=\EuScript V\oplus\EuScript H$. Observe
that the bundle $\EuScript H$ inherits a complex structure from $TY$.
Then an equivariant map $u$ satisfies the equation $\partial u=0$ if and only if %
$I_{\ssst Z}u_*^h I_{\ssst Y}=u_*^h$ or, in other words, if and only if the
horizontal component has no complex--linear component.
\bigskip

\bibliographystyle{acm}

\end{document}